\newtheorem{prop}{Proposition}[section]
\newtheorem{thm}[prop]{Theorem}
\newtheorem{lem}[prop]{Lemma}
\newtheorem{cor}[prop]{Corollary}
\newtheorem{defn}[prop]{Definition}
\theoremstyle{definition}
\newtheorem{rem}[prop]{Remark}
\newtheorem*{ack}{Acknowledgement}
\def\co{\colon\thinspace}
\newcommand{\C}{\mathbb C}
\newcommand{\rmd}{\mathrm d}
\newcommand{\rme}{\mathrm e}
\newcommand{\rmi}{\mathrm i}
\newcommand{\MM}{\mathcal M}
\newcommand{\N}{\mathbb N}
\newcommand{\Q}{\mathbb Q}
\newcommand{\R}{\mathbb R}
\renewcommand{\SS}{\mathcal S}
\newcommand{\TT}{\mathcal T}
\newcommand{\UU}{\mathcal U}
\newcommand{\WW}{\mathcal W}
\newcommand{\Z}{\mathbb Z}
\newcommand{\ZZ}{\mathcal Z}
\newcommand{\CR}[1]{\bar{\partial}_{#1}}
\newcommand{\lra}{\longrightarrow}
\newcommand{\ra}{\rightarrow}
\DeclareMathOperator{\cut}{\mathrm{cut}}
\DeclareMathOperator{\ev}{\mathrm{ev}}
\DeclareMathOperator{\fs}{\mathrm{FS}}
\DeclareMathOperator{\genus}{\mathrm{genus}}
\DeclareMathOperator{\Hom}{\mathrm{Hom}}
\DeclareMathOperator{\loc}{\mathrm{loc}}
\newcommand{\boldxi}{\mbox{\boldmath $\xi$}}
\begin{document}

\author{Stefan Suhr}
\address{DMA, \'Ecole Normale sup\'erieure and Universit\'e Paris-Dauphine,
45 rue d'Ulm, 75230 Paris Cedex 05, France}
\email{stefan.suhr@ens.fr}
\author{Kai Zehmisch}
\address{Mathematisches Institut, Westf\"alische Wilhelms-Universit\"at M\"unster, Einsteinstr. 62,
D-48149 M\"unster, Germany}
\email{kai.zehmisch@wwu.de}

\title[Polyfolds and the Weinstein conjecture]
{Polyfolds, Cobordisms, and the strong Weinstein conjecture}

\date{13.10.2016}

\begin{abstract}
  We prove the strong Weinstein conjecture
  for closed contact manifolds
  that appear as the concave boundary
  of a symplectic cobordism
  admitting an essential local foliation by holomorphic spheres.
\end{abstract}

\subjclass[2010]{53D42; 53D40, 57R17, 37J45}
\thanks{The research in this article is supported by the Institute for Advanced Study (Princeton, NJ).
S. Suhr: The research leading to these results has received funding from the
European Research Council under the European Union's Seventh Framework
Programme (FP/2007-2013) / ERC Grant Agreement 307062.
K. Zehmisch is partially supported by DFG grant ZE 992/1-1.}

\maketitle


\section{Introduction\label{sec:intro}}

Given a closed (co-orientable) contact manifold $(M,\xi)$
and a defining contact form $\alpha$
(i.e.\ $\xi=\ker\alpha$)
Weinstein conjectured in \cite{wein79}
that the Reeb vector field $R$,
which is uniquely defined by
$i_R\rmd\alpha=0$ and $\alpha(R)=1$,
admits a periodic solution.
The Weinstein conjecture was proven by Taubes \cite{taub07}
for all closed contact manifolds of 
dimension $3$
and has been verified in higher dimensions
in many situations most recently in the presence of
contact connected sums in \cite{geizeh14a,geizeh14b,gnw14}.
We refer the reader to \cite{pasq12}
for the state of the art of the conjecture.

A stronger conjecture was given in \cite{ach05}
that asks for a finite collection of periodic solutions of $R$,
a so-called {\bf null-homologous Reeb link},
that oriented by $R$ and eventually counted with a positive multiple of a period
represents the trivial class in the homology of $M$.
We refer to the existence question of a null-homologous Reeb link
as the {\bf strong Weinstein conjecture}
and remark
that the stronger version of the conjecture is not covered
by Taubes result.
The aim of the present work
is to confirm the strong Weinstein conjecture
for closed contact manifolds $(M,\xi=\ker\alpha)$
that appear as the concave end of symplectic cobordisms
with particular properties.
This will generalize the results obtained in \cite{geizeh12}.

The notion of a symplectic cobordism was introduced in
\cite{egh00,hwz03} in the context of symplectic field theory.
A {\bf symplectic cobordism}
is a compact connected symplectic manifold $(W,\omega)$
with boundary that admits a {\bf Liouville vector field} $Y$
near $\partial W$,
which by definition satisfies $L_Y\omega=\omega$.
According to the boundary orientation induced
by the orientation of the symplectic form
the Liouville vector field $Y$ points either in or out of $W$.
This decomposes the boundary of $W$ into the {\bf concave boundary}
$M_-$ (along which $Y$ points in) and into the {\bf convex boundary}
$M_+$ (along which $Y$ points out).
The Liouville vector field defines contact forms
$\alpha_-$ and $\alpha_+$ by restricting
$i_Y\omega$ to the tangent bundles of $M_-$ and $M_+$, resp.,
so that $(M_-,\alpha_-)$ and $(M_+,\alpha_+)$
are particular examples of contact type hypersurfaces
in $(W,\omega)$, cf. \cite{mcsa98}.

A theorem of Hofer \cite{hofe93} relates
the existence of closed Reeb orbits
to the existence of (punctured finite energy) holomorphic curves
in symplectic cobordisms, cf.\ \cite{albhof09,geizeh12,geizeh13}.
In order to utilize this relation we will consider symplectic
cobordisms that admit a so-called {\it essential local foliation
by holomorphic spheres}.
The definition is given in Section \ref{subsec:essholfol} below.

Generalizing the notion of symplectic cobordisms
we will consider compact connected symplectic
manifolds $(W,\omega)$
that have boundary components
that are either concave, convex, or
foliated by symplectic spheres.
In the case the foliation is given
by an essential local foliation
by holomorphic spheres
in the sense of Section \ref{subsec:essholfol}
$(W,\omega)$ is called a {\bf symplectic cobordism}
as well.

The symplectic area of the holomorphic spheres of the foliation
as it will turn out induces an upper bound
on the  {\bf total action} of a null-homologous Reeb link,
which is by definition
the sum of the actions of the link components
counted with the selected period multiplicities.

\begin{thm}
  \label{thm:mainthm}
 If $(M,\alpha)$ is the concave boundary
 of a symplectic cobordism $(W,\omega)$,
 and $(W,\omega)$
 admits an essential local foliation
 by holomorphic spheres of area $\pi\varrho^2$,
 then there exists a null-homologous Reeb link
 in $(M,\alpha)$ of total action smaller than $\pi\varrho^2$.
 Additionally,
 if $(W,\omega)$ has no concave boundary
 it has no convex boundary either.
\end{thm}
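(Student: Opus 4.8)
The plan is to study the moduli space of $J$-holomorphic spheres in the homology class $A$ represented by the fibres of the essential local foliation, and to let a compactness argument for that moduli space run: its only admissible failure mode will produce the null-homologous Reeb link. First I would attach to $W$ the negative cylindrical end $\big((-\infty,0]\times M,\rmd(\rme^{s}\alpha)\big)$ along the concave boundary, together with a positive cylindrical end along the convex boundary if there is one, and choose an $\omega$-compatible almost complex structure $J$ on the completion $W^{+}$ that is $\R$-invariant and adapted to $\alpha$ on the ends, makes the spheres of the foliation $J$-holomorphic, and is regularized within the polyfold framework of Hofer--Wysocki--Zehmisch, so that the moduli space $\MM$ of (perturbed) $J$-holomorphic spheres in class $A$ and its SFT compactification $\overline{\MM}$ are compact oriented weighted branched manifolds, the latter possibly with boundary. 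Because the fibres have symplectic area exactly $\pi\varrho^{2}$, bubbling is severely constrained (any bubble would be a sphere of positive area at most $\pi\varrho^{2}$, and the essentiality of the foliation is used to rule the relevant degenerations out of $\MM$); together with $A\cdot A=0$, $c_{1}(A)=2$ by adjunction, and automatic transversality for simple spheres in dimension four, the fibres are embedded, pairwise disjoint by positivity of intersection, and Fredholm regular, so by the usual open--closed argument every element of $\MM$ is such an embedded sphere and the universal curve over $\MM$ submerses onto the open subset $\mathcal{A}\subseteq W$ it sweeps out.

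Next I would identify what can occur in $\overline{\MM}\setminus\MM$. The maximum principle for the adapted $J$ forbids a sphere of positive area from touching the convex boundary or running up the positive cylindrical end, and the essentiality of the local foliation (with positivity of intersection) forbids escape into any boundary component foliated by symplectic spheres; so the only source of non-compactness is the negative cylindrical end over $(M,\alpha)$. SFT compactness then exhibits such a limit as a holomorphic building whose top level $u_{0}$ is a punctured (nodal) sphere in $W^{+}$ with finitely many negative punctures, glued to finitely many symplectization levels in $\R\times M$ along matching Reeb orbits, each negative puncture of the building asymptotic to a closed Reeb orbit $\gamma_{i}$ of $\alpha$ with covering multiplicity $m_{i}\geq 1$. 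Since the approximating spheres are closed and homologous, the relative class of the limiting building in $H_{2}(W^{+},M)$ lies in the image of $H_{2}(W)$, so its algebraic boundary $\sum_{i}m_{i}[\gamma_{i}]$ vanishes in $H_{1}(M)$: the $\gamma_{i}$, oriented by $R$ and counted with the multiplicities $m_{i}$, form a null-homologous Reeb link. By the energy identity (Stokes on the building, with $\alpha$ a primitive of the symplectic form on the ends) the total action $\sum_{i}m_{i}\!\int_{\gamma_{i}}\alpha$ equals the total energy of the building, hence is at most $\pi\varrho^{2}$; it is strictly smaller because the top level $u_{0}$, being non-constant and not contained in the cylindrical end, contributes positive $\omega$-area over $W$.

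It then remains to guarantee that $\overline{\MM}\setminus\MM\neq\emptyset$. Suppose not, so no sequence in $\MM$ escapes down the negative end. Then $\mathcal{A}$ is the image of the compact universal curve over $\overline{\MM}=\MM$, hence closed in $W$; it is also open in $W$ by the submersion property and non-empty; since $W$ is connected, $\mathcal{A}=W$. But the adapted $J$ keeps every fibre sphere away from the concave boundary, so the concave boundary is disjoint from $\mathcal{A}=W$, which is absurd. Hence escape occurs and the previous paragraph produces a null-homologous Reeb link in $(M,\alpha)$ of total action smaller than $\pi\varrho^{2}$, proving the first claim. The addendum follows by running the same reasoning in reverse: if $(W,\omega)$ has no concave boundary there is no negative cylindrical end and no escape, so $\mathcal{A}=W$ exactly as above; if $W$ then had a convex boundary component, some point of it would be swept by a fibre sphere touching it from inside, contradicting the maximum principle. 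Therefore $(W,\omega)$ has no convex boundary.

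The main obstacle is not the homological and energy bookkeeping above, which is routine, but the compactness-and-regularization package that underlies it. One cannot make $J$ generic while keeping the prescribed local foliation $J$-holomorphic, so the transversality of $\MM$ and $\overline{\MM}$ has to be achieved by polyfold perturbations; these must be taken supported away from the cylindrical ends (so that the maximum-principle arguments controlling escape survive), compatible with the neck-stretching that produces the SFT buildings, and coherent with orientations and branching weights, so that $\overline{\MM}$ is genuinely a compact branched manifold whose additional points are broken buildings carrying honest Reeb-orbit asymptotics. Setting up these compatibilities carefully — in particular arranging that a genuine boundary point of $\overline{\MM}$ really records a closed Reeb orbit and a valid relative homology class — is where the work of the proof concentrates.
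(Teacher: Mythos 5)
Your escape analysis (Lemma \ref{lem:firstcase} in the paper) is essentially identical to the paper's: SFT compactness, a holomorphic building whose bottom level lives in $\R\times M$ with positive punctures only, energy and homology bookkeeping giving a null-homologous Reeb link of total action below $\pi\varrho^{2}$. The non-escape half is where you diverge from the paper and where the gaps lie.

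First, several of your tools are four-dimensional: adjunction to compute $c_{1}(A)$, positivity of intersection, $A\cdot A=0$, and especially automatic transversality for simple spheres. The theorem is stated for cobordisms of arbitrary dimension, and indeed the whole reason the paper invokes polyfolds and abstract sc${}^{+}$-perturbations (\cite{hwz-III09,hwz-gw11}) is that geometric transversality and intersection-theoretic control are unavailable in general. Your argument as written would only reach the case $\dim W=4$. Moreover, even after polyfold perturbation the solution set $\SS$ is a \emph{weighted branched} suborbifold, not a smooth manifold; your ``open--closed via the submersion property'' argument does not apply to such an object in any straightforward way. The paper replaces it with a cohomological degree argument: integrate a top form $\Omega$ (supported inside $B_{K}\times\C P^{1}$) against $\ev\co\SS\ra W'$ using the branched Stokes theorem of \cite{hwz-int10}, obtaining $\int_{(\SS,\vartheta)}\ev^{*}\Omega=1$, then isotope $\Omega$ into the complement of $\ev(\SS)$ and get $0$ plus a boundary term that vanishes. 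That integration argument is what makes the contradiction robust against branching, weights, and higher dimension.

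Second, your stated contradiction is incorrect as written: ``the adapted $J$ keeps every fibre sphere away from the concave boundary'' is false. After completion, $M_-=\{0\}\times M$ is an interior hypersurface of $W'$; there is no maximum-principle barrier there (the $\R$-coordinate is unbounded below, so a closed curve can freely cross $M_-$ and dip into the concave end). The correct contradiction in the non-escape case is not ``curves avoid $M_-$'' but ``the evaluation map cannot have degree $1$ onto a region it misses,'' which is exactly what the paper's Stokes argument encodes; or, if you prefer the naive sweeping picture, ``a compact image cannot equal the non-compact $W'$.''

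Third, you never address the degenerate case of $\alpha$. The bubbling-off/SFT compactness step requires a non-degenerate contact form (so that asymptotic limits to Reeb orbits exist with exponential decay). The paper handles this by perturbing $\alpha$ to non-degenerate $\alpha_{k}\to\alpha$, extracting the Reeb links for each $k$ with uniformly bounded number of components and total action, and passing to a $C^{\infty}$-limit (Section \ref{sec:proof}). You should at least note that this limiting step is needed, and why the number of link components is uniformly bounded (the flow-box bound $A_k>A/2$).

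Finally, a minor structural point: if $\partial B\neq\emptyset$, the solution set $\SS$ has genuine boundary (a collar of $\partial B\times\C P^{1}$), and the Stokes boundary term must be shown to vanish; the paper does this by choosing the primitive $\mu$ with compact support away from $\partial W'$. In an open--closed argument this boundary also has to be accounted for, which your sketch does not do.
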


\begin{rem}
 The only surface $(W,\omega)$
 to which the theorem applies is $\C P^1$.
\end{rem}

The qualitative content of the theorem is valid not only
for one particular contact form on $M$.
In fact, the construction from \cite[Section 3.3]{geizeh12}
allows one to conclude
for any contact form whose kernel is equal to
$\xi=\ker\alpha$.
Each contact form that defines $(M,\xi)$
appears as a graph over the zero section
in the symplectisation of $(M,\alpha)$.
After a shift in the negative $\R$-direction the graph
can be assumed to lie below the zero section.
If $(M,\alpha)$ is the concave boundary
of a symplectic cobordism $(W,\omega)$,
then a positive constant multiple of
any $\xi$-defining contact form
can be realized 
as the concave boundary of a slightly modified
symplectic cobordism.
The cobordism is obtained by
gluing the symplectic cobordisms that is
cut out by the shifted graph and the zero section
to the cobordism $(W,\omega)$ along $(M,\alpha)$.
To express this circumstance we will say that 
$(M,\xi)$ is the concave boundary of $(W,\omega)$.
Convex boundaries are handled similarly.

\begin{cor}
 The strong Weinstein conjecture holds for all contact manifolds
 that are the concave boundary of a symplectic cobordism
 that is provided with an essential local foliation by holomorphic spheres.
\end{cor}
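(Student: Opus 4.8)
The plan is to derive the corollary directly from Theorem \ref{thm:mainthm} together with the rescaling-and-gluing mechanism recorded in the Remark above. Fix a closed contact manifold $(M,\xi)$ that arises as the concave boundary of a symplectic cobordism $(W,\omega)$ carrying an essential local foliation by holomorphic spheres, say of area $\pi\varrho^2$, and let $\alpha$ be the distinguished contact form for which $M=M_-$ is literally the concave boundary. The strong Weinstein conjecture for $(M,\xi)$ asks, for every contact form $\alpha'$ with $\ker\alpha'=\xi$, for a null-homologous Reeb link of the associated Reeb field; so I must produce such a link for each such $\alpha'$.

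For the distinguished contact form $\alpha$ there is nothing to do beyond quoting Theorem \ref{thm:mainthm}: it already yields a null-homologous Reeb link in $(M,\alpha)$ (in fact one of total action smaller than $\pi\varrho^2$), which is precisely the object demanded by the conjecture. For an arbitrary defining contact form $\alpha'=f\alpha$ with $f>0$, I would invoke the construction of \cite[Section 3.3]{geizeh12} as summarised in the Remark: after a shift in the negative $\R$-direction the graph of $\alpha'$ in the symplectisation of $(M,\alpha)$ lies below the zero section, and gluing the symplectic collar it cuts out onto $(W,\omega)$ along $(M,\alpha)$ realises a positive constant multiple $c\alpha'$ as the concave boundary of a new symplectic cobordism $(W',\omega')$. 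Since this modification only alters $(W,\omega)$ near its concave boundary, it leaves the region carrying the essential local foliation by holomorphic spheres untouched, so $(W',\omega')$ again satisfies the hypotheses of Theorem \ref{thm:mainthm} with the same area $\pi\varrho^2$. Applying the theorem to $(M,c\alpha')$ then produces a null-homologous Reeb link for $c\alpha'$.

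It remains to transport this link back to $\alpha'$. The Reeb vector field of $c\alpha'$ equals $c^{-1}$ times that of $\alpha'$, so the two fields share the same unparametrised, $R$-oriented periodic orbits; a finite collection of oriented orbits together with positive period multiplicities that bounds in $H_*(M)$ for $c\alpha'$ therefore does so for $\alpha'$ as well, with total action merely rescaled by $c^{-1}$. Hence $(M,\xi)$ satisfies the strong Weinstein conjecture.

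The only points requiring care — and the closest thing to an obstacle — are verifying that the gluing of the Remark genuinely preserves the essential local foliation together with its area $\pi\varrho^2$, and that the notion of a null-homologous Reeb link is insensitive to a constant rescaling of the contact form. Both are routine bookkeeping once Theorem \ref{thm:mainthm} and the construction of \cite{geizeh12} are available, so the corollary is in essence a formal consequence of the theorem.
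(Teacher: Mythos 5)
Your proposal is correct and follows essentially the same route the paper intends: the corollary is a direct consequence of Theorem \ref{thm:mainthm} combined with the gluing construction from \cite[Section 3.3]{geizeh12} described in the remark immediately preceding the corollary, which realises a positive constant multiple of any $\xi$-defining contact form as the concave boundary of a modified cobordism still carrying the essential local foliation. Your final rescaling observation — that the Reeb fields of $c\alpha'$ and $\alpha'$ share the same oriented unparametrised periodic orbits, so null-homologous Reeb links transfer — correctly supplies the small step the paper leaves implicit.
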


A contact manifold is called {\bf co-fillable}
if it is a boundary component of a symplectic cobordism
that has more then one convex but no concave boundary components,
cf.\ \cite{wen13}.
Examples of McDuff \cite{mcd91}, Geiges \cite{gei94,gei95},
and Massot-Niederkr\"uger-Wendl \cite{mnw13} show the
existence of co-fillabe contact manifolds.
Generalizing a result of McDuff \cite{mcd91} we obtain
(cf.\ Remark \ref{rem:weak}):

\begin{cor}
 If $(M,\xi)$ is the concave boundary of a
 symplectic cobordism $(W,\omega)$
 as in the theorem, then $(M,\xi)$ is not co-fillable.
\end{cor}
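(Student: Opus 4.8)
The plan is to argue by contradiction, reducing the statement to the ``additionally'' clause of Theorem~\ref{thm:mainthm} by a gluing construction. Suppose that $(M,\xi)$ is co-fillable. Then, by definition, there is a symplectic cobordism $(V,\omega_V)$ whose boundary consists only of convex components, more than one of them, one of which is $(M,\xi)$; in particular $V$ is compact and connected and carries no concave boundary. On the other side we have the symplectic cobordism $(W,\omega)$ from the hypothesis of the corollary: it admits an essential local foliation by holomorphic spheres and has $(M,\xi)$ as its concave boundary.

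The idea is to glue $W$ to $V$ along $M$. The concave boundary of $W$ and the boundary component $M$ of $V$ carry contact forms with the same kernel $\xi$ but a priori different; as explained in the Remark following Theorem~\ref{thm:mainthm} and following \cite[Section 3.3]{geizeh12}, after replacing the contact form on the $M$-side of $W$ by a suitable positive constant multiple and inserting a collar modeled on a piece of the symplectisation of $(M,\xi)$, the Liouville structures match along $M$ and the symplectic forms glue to a symplectic form $\widehat{\omega}$ on
\[
  \widehat{W} := W \cup_M V .
\]
(If one wishes to keep track of areas, one rescales $\omega$ accordingly; for the present statement the area plays no role.) Then $\widehat{W}$ is compact and, being glued along the nonempty hypersurface $M$, connected, and near its remaining boundary it carries a Liouville vector field. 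Its boundary is the disjoint union of the foliated and convex boundary components of $W$ with the convex boundary components of $V$ other than $M$; in particular $\widehat{W}$ has \emph{no} concave boundary, since the only concave boundary of $W$ was $M$, which has been glued away, while $V$ had none. Moreover the essential local foliation by holomorphic spheres of $W$ is a local structure supported away from $M$, so it persists on $\widehat{W}$. Hence $(\widehat{W},\widehat{\omega})$ is a symplectic cobordism in the sense of this paper admitting an essential local foliation by holomorphic spheres.

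Now the ``additionally'' part of Theorem~\ref{thm:mainthm} applies to $(\widehat{W},\widehat{\omega})$: since it has no concave boundary, it has no convex boundary either. But $V$ has at least two convex boundary components, and at most one of them, namely $M$, was consumed in the gluing, so $\widehat{W}$ has at least one convex boundary component. This contradiction shows that $(M,\xi)$ cannot be co-fillable. The only point that requires care is the gluing along $M$: one must check that the two contact forms on the copies of $M$ can be matched (handled by the construction of \cite[Section 3.3]{geizeh12} recalled in the Remark) and that the defining properties of an essential local foliation by holomorphic spheres survive when a cobordism is attached on the concave side, which holds because that foliation lives away from $M$. The quantitative content of Theorem~\ref{thm:mainthm} is not needed here; only the dichotomy ``no concave boundary $\Rightarrow$ no convex boundary'' is used, cf.\ Remark~\ref{rem:weak}.
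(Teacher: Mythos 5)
Your argument is the intended one: the paper states this corollary without an explicit proof, but the reference to McDuff and to Remark~\ref{rem:weak} makes clear that the proof is exactly the gluing you describe, followed by the ``no concave boundary $\Rightarrow$ no convex boundary'' clause of Theorem~\ref{thm:mainthm}. The bookkeeping is right: the contact forms on the two copies of $M$ are matched (up to a positive constant and a symplectisation collar) by the construction recalled after the theorem, $\widehat W$ is compact, connected, has no concave boundary, and retains at least one convex boundary component inherited from $V$, which contradicts the additional clause of the theorem.

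The one step that is justified too quickly is precisely the one you single out at the end. Persistence of \emph{essentiality} under the gluing does not follow from the foliation ``living away from $M$'': by Definition~\ref{essdef}, essentiality is a constraint on \emph{all} nodal holomorphic spheres in the ambient completed manifold that meet $B\times\C P^1$, and such a sphere may have components far from the foliated region --- indeed the whole point of the argument is that the spheres sweep through the attached filling $V$. Replacing the concave end of $W'$ by $V$ enlarges the class of curves the condition must control, so essentiality of the foliation with respect to $W'$ does not formally imply essentiality with respect to $\widehat W$; without it, non-standard curves could re-enter $B\times\C P^1$ and the degree-one identification of $\ev^{-1}(B_K\times\C P^1)$ used in Section~\ref{subsec:gwint} would break down. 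In practice this is harmless because in every instance treated in the paper essentiality is verified by the local argument of Proposition~\ref{prop:essential} (maximum principle in a cylindrical collar plus the homological and area constraints), which only uses the structure of the foliated region and is therefore insensitive to what is glued on along the concave boundary; the corollary should be read with that understanding, or with essentiality required for the glued cobordism $\widehat W$. With that caveat made explicit, your proof is complete and coincides with the paper's intended argument.
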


In \cite{geizeh12} the following capacity
for symplectic manifolds $(V,\omega)$
that are not of dimension $2$
was introduced:
\[
c(V,\omega)=
\sup_{(M,\alpha)}\inf\alpha\;.
\]
The supremum is taken over all closed contact type hypersurfaces 
$(M,\alpha)$ in $(V,\omega)$ and $\inf\alpha$ is the minimal
total action of a null-homologous Reeb link in $(M,\alpha)$.

\begin{cor}
 If $(W,\omega)$ as in the theorem
 has no concave (and hence no convex) boundary,
 then $c(W,\omega)\leq\pi\varrho^2$.
\end{cor}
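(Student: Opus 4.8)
The plan is to reduce the estimate to Theorem~\ref{thm:mainthm} by cutting $W$ open along a contact type hypersurface. First fix an arbitrary closed contact type hypersurface $(M,\alpha)$ in $(W,\omega)$; it is enough to produce a null-homologous Reeb link in $(M,\alpha)$ of total action strictly less than $\pi\varrho^2$, since then $\inf\alpha<\pi\varrho^2$ and, $M$ being arbitrary, the supremum defining $c(W,\omega)$ is at most $\pi\varrho^2$. By definition of contact type a Liouville vector field $Y$ is given on a neighbourhood of $M$, and I would use its flow to realise that neighbourhood as a collar $M\times(-\varepsilon,\varepsilon)\hookrightarrow W$ with $\omega=\rmd(\rme^s\alpha)$, $Y=\partial_s$, and $M=M\times\{0\}$.

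Next I would cut $W$ open along $M$. Suppose first that $M$ does not separate $W$; then the result is a connected compact symplectic manifold $\widehat W$ whose boundary consists of the boundary components of $W$ together with a concave copy $M_-$ and a convex copy $M_+$ of $M$, each carrying the contact form $\alpha$. Since under the hypothesis of the corollary every boundary component of $W$ is foliated, one of them carries the essential local foliation by holomorphic spheres of area $\pi\varrho^2$, and --- provided this foliation has been arranged to lie in a part of $W$ untouched by the cutting --- $(\widehat W,\omega)$ is again a symplectic cobordism to which Theorem~\ref{thm:mainthm} applies with $M_-\cong(M,\alpha)$ as concave boundary, delivering the desired link. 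If instead $M$ separates $W$, write $W=W_1\cup_M W_2$ with $Y$ pointing into $W_1$, so that $M$ is the concave boundary of $W_1$ and the convex boundary of $W_2$, and the foliated boundary components of $W$ are distributed among $W_1$ and $W_2$. If at least one of them lies in $W_1$, then $(W_1,\omega)$ is a symplectic cobordism with concave boundary $(M,\alpha)$ carrying the essential local foliation, and Theorem~\ref{thm:mainthm} again supplies the link.

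The remaining possibility is that $M$ separates and all foliated boundary components lie in $W_2$. But then $(W_2,\omega)$ would be a symplectic cobordism with no concave boundary yet with convex boundary $M$, which is ruled out by the additional clause of Theorem~\ref{thm:mainthm}. Hence this case cannot occur, and in every case $(M,\alpha)$ bounds a null-homologous Reeb link of total action less than $\pi\varrho^2$, completing the argument.

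I expect the main obstacle to be the bracketed point above: one must check that the contact type hypersurface $M$ can be taken disjoint from (more precisely, that the concave side of $M$ can be taken to contain) the region covered by the essential local foliation, so that the holomorphic sphere foliation descends to the cut-open cobordism. In the typical situation $M$ is a compact hypersurface in the interior of $W$, hence at positive distance from $\partial W$, and the foliation --- being supported near a foliated boundary component --- may be shrunk into a collar of that component thin enough to be disjoint from $M$; making this precise, and covering any degenerate configurations, is where the care is needed. Everything else is bookkeeping on top of Theorem~\ref{thm:mainthm}, the one genuinely pleasant point being that the separating case with no foliation on the concave side is excluded for free by the ``no concave boundary implies no convex boundary'' half of the theorem.
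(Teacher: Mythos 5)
Your strategy---cut $W$ along $M$, apply Theorem \ref{thm:mainthm} to the piece for which $M$ is concave, and use the ``no concave implies no convex'' clause to exclude the residual configuration---is the intended one, and your reduction to producing a link on an arbitrary $M$ is correct. But the gap you flag is real, and the fix you propose does not close it. Definition \ref{essdef} quantifies over \emph{all} admissible $J$, i.e.\ all compatible $J$ that are split on $B\times\C P^1$. If you replace $B$ by a thinner collar $B'\subset B$, the class of admissible $J$ becomes strictly \emph{larger} (it is constrained only on the smaller set $B'\times\C P^1$), so essentiality of $B\times\C P^1$ does not descend to $B'\times\C P^1$ without a new argument. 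Moreover, when $\partial W=\emptyset$ the manifold $B$ is boundaryless and there is no collar to shrink into. The observation that actually carries the proof is different: the argument in Sections \ref{sec:stable}--\ref{sec:proof} only ever uses $J$ that, besides being split on $B\times\C P^1$, are compatible with $\alpha$ on the concave end, and for such $J$ the maximum principle confines every compact nodal holomorphic sphere to the compact piece $W_1\subset W$. Once $B\times\C P^1\cap M=\emptyset$ one extends $J|_{W_1}$ to a compatible, admissible $J$ on all of $W$ and quotes essentiality of $(W,\omega)$ directly; re-establishing essentiality for a shrunken foliation is never needed. So the only thing that genuinely needs arranging is $B\times\C P^1\cap M=\emptyset$, and within the generality of Definition \ref{essdef} there is no a priori reason for this; it does hold for the constructions of Section \ref{sec:exandappl}, where the collar width $\varepsilon$ in Proposition \ref{prop:essential} may be taken small from the outset.

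A second inaccuracy sits in your separating case: the trichotomy should be governed by which side of $M$ contains the base point $*$ (more precisely, the component of $B$ carrying it), not by where the foliated boundary components land, since the essential property and the moduli space $\MM$ are tied to the class $[*\times\C P^1]$. With the correct split, if $*$ lies on the concave side apply the theorem there; if on the convex side, that side has $M$ as convex boundary, no concave boundary, and carries the essential foliation, so the final clause of Theorem \ref{thm:mainthm} yields the contradiction. Your version, keyed to the location of foliated boundary components, does not in general guarantee that the piece you apply the theorem to actually contains $*$.
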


In particular, the Gromov radius
of $(W,\omega)$ as introduced in \cite{grom85}
is bounded by $\pi\varrho^2$
from above.
This can also be derived from the following {\it uniruledness} result.
This is
because no embedding of an open ball into $W$
can intersect $\partial B\times\C P^1$,
which is the boundary (in case if non-empty)
of the local foliation domain $B\times\C P^1$
that we require to exist,
cf.\ Section \ref{subsec:essholfol}.

\begin{cor}
  \label{cor:gwstatement}
 If $(W,\omega)$ as in the theorem
 has no contact type boundary components,
 then through each point of $W$ there passes a
 (nodal) holomorphic sphere for any compatible
 almost complex structure that coincides with
 $J_B\oplus\rmi$ on $U\times\C P^1$,
 where $U$ is a collar neighborhood of $\partial B$
 in $B$.
\end{cor}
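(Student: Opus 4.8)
The plan is to read Corollary \ref{cor:gwstatement} off the moduli space of holomorphic spheres that already underlies the proof of Theorem \ref{thm:mainthm}, now for the specific almost complex structure $J$ in the statement, and to convert the essentialness of the local foliation into a mapping‑degree statement for the evaluation map. Write $A\in H_2(W;\Z)$ for the class of a fibre $\{b\}\times\C P^1$, and let $\overline{\MM}=\overline{\MM}_{0,1}(W,A;J)$ be the polyfold compactification of the moduli space of stable genus‑zero $J$‑holomorphic curves in class $A$ with one marked point, with evaluation map $\ev\co\overline{\MM}\ra W$. It suffices to prove that $\ev$ is onto: any element of $\overline{\MM}$ lying over a given point $p$ is then a (in general nodal) holomorphic sphere through $p$.

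First I would analyse $\overline{\MM}$ near $\partial W$. Since $W$ has no contact type boundary, a neighbourhood of $\partial W$ is the split collar $U\times\C P^1$ with $J=J_B\oplus\rmi$ and $\partial W=\partial B\times\C P^1$, and the fibres $\{b\}\times\C P^1$, $b\in U$, are honest $J$‑holomorphic spheres of area $\pi\varrho^2$. As in Section \ref{subsec:essholfol}, the defining properties of an essential local foliation force every $J$‑holomorphic sphere in class $A$ meeting the collar to be a reparametrised such fibre — one way to see the local model is to project to $B$, use that $A$ projects to $0\in H_2(B)$, and conclude that the projection is constant. Since $\omega(A)=\pi\varrho^2$ is exactly the area of a fibre, no proper subcurve of a stable curve in class $A$ can lie in the collar either, so no configuration in class $A$ touches $\partial W$ except a whole fibre over $\partial B$; moreover no Gromov limit of spheres in class $A$ escapes $W$ or — there being no contact type boundary — breaks off a finite energy plane along a Reeb orbit. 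Hence $\overline{\MM}$ is compact, its boundary‑and‑corner set is exactly the set of fibres over $\partial B$ (together with the marked point), and there $\ev$ is the unperturbed identification with $\partial W$. Exactly as in the proof of Theorem \ref{thm:mainthm}, polyfold regularisation then endows $\overline{\MM}$ with the structure of a compact oriented branched orbifold with boundary of dimension $(\dim W-6)+2c_1(A)+2=\dim W$ (the Chern number of the fibre class being $c_1(A)=2$), matching the boundary orientation so that $\ev|_{\partial\overline{\MM}}$ is an orientation‑preserving diffeomorphism onto $\partial W$.

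It then remains to run the relative mapping‑degree argument: $\ev\co(\overline{\MM},\partial\overline{\MM})\ra(W,\partial W)$ is a proper map of compact oriented spaces of equal dimension whose restriction to the boundary has degree one, so applying $\partial$ to the identity $\ev_*[\overline{\MM},\partial\overline{\MM}]=\deg(\ev)\cdot[W,\partial W]$ and using $\partial[W,\partial W]=[\partial W]$ yields $\deg(\ev)=1$; hence $\ev$ is onto $W$. Finally, letting the abstract perturbations tend to zero and invoking Gromov compactness once more — again essentialness and the absence of contact type boundary exclude any degeneration other than node formation inside $W$ — upgrades the perturbed spheres through a point $p$ to a genuine $J$‑holomorphic nodal sphere through $p$ in class $A$, which is the assertion. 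The step I expect to carry all the weight is the description of $\overline{\MM}$ near $\partial W$: that it is compact, that its only boundary stratum is the foliation collar, and that it is coherently oriented, so that the boundary degree is $+1$ rather than $0$. This is precisely where the essentialness hypothesis and the polyfold transversality theory of the earlier sections are needed, in the same way as in the proof of Theorem \ref{thm:mainthm}; once it is in place, the degree computation is routine. (In the degenerate case $\partial W=\emptyset$ one has $W=B\times\C P^1$ with no constraint on $J$, and the statement reduces to the classical ruledness of $W$, equivalently to the non‑vanishing of the genus‑zero Gromov–Witten invariant of $A$ with a point insertion.)
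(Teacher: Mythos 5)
Your proposal is correct and tracks the paper's own argument very closely: both work with the polyfold-regularized perturbed solution space $\SS$, both deduce surjectivity of the evaluation map from a ``degree $=1$'' statement, and both then let the abstract perturbations tend to zero and invoke compactness in $\ZZ$ to convert perturbed solutions through a given point into an honest nodal $J$-holomorphic sphere.

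The one genuine difference is how the nontriviality of $\ev$ is pinned down. You compute the relative degree of $\ev\co(\SS,\partial\SS)\ra(W,\partial W)$ from the boundary, arguing that $\ev|_{\partial\SS}$ is an orientation-preserving identification with $\partial B\times\C P^1$. The paper instead reads off the degree at an interior regular value: it places a bump form $\Omega$ in the foliation region $B_K\times\C P^1$, observes that by essentialness the only solutions meeting this region are the unperturbed standard spheres (an embedded copy of $B_K\times\C P^1$ in $\SS$ on which $\ev$ is the obvious identification with weight $1$), so $\int_{(\SS,\vartheta)}\ev^*\Omega=1$, and then moves $\Omega$ away by a compactly supported isotopy together with the branched Stokes theorem. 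This interior computation is a little more economical, since it avoids having to verify that the boundary identification is coherently oriented; your boundary-degree version is fine too, but is exactly the step where, as you yourself note, most of the care is needed. Otherwise the analysis of $\overline{\MM}$ near $\partial W$ (essentialness $\Rightarrow$ only whole fibres meet the collar, no SFT breaking since there is no contact type end) is precisely what the paper's Sections \ref{subsec:modulisp}--\ref{subsec:gwint} establish and reuse here.

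One small inaccuracy: in your closing parenthetical you assert that if $\partial W=\emptyset$ then $W=B\times\C P^1$. This need not hold, since $B$ is an open (non-compact, boundaryless) manifold, so $B\times\C P^1$ is merely an open subset of a possibly larger closed $W$; what fails is only the interpretation ``no constraint on $J$,'' not the conclusion. In that case the degree of $\ev$ is computed exactly as in the paper at a point of $B\times\C P^1$, and the argument goes through unchanged.
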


In order to prove the theorem
we use holomorphic spheres corresponding to the
given local foliation.
The associated moduli space is non-empty
and regular in a neighborhood of the holomorphic spheres
that come from the local foliation.
The assumption of being essential results in
uniqueness properties of the moduli space.
In order to ensure global regularity properties of the moduli space,
which are obstructed by bubbling off of multiply covered
holomorphic spheres of negative first Chern class,
semi-positivity of $(W,\omega)$ could be used.
In order to get an unrestricted statement we will employ
the regularity theory developed by Hofer-Wysocki-Zehnder instead.
The space of (not necessarily holomorphic)
stable curves has a polyfold structure,
see \cite{hwz-gw11}.
Using abstract perturbations (see \cite{hwz-III09})
the moduli space can be approximated
by solution spaces of perturbed Fredholm problems
that carry the structure of a smooth branched orbifold
with weights.
This enables us to conclude as in \cite{geizeh12}. 

In Section \ref{sec:definition}
we formulate the definition of an
essential local foliation by holomorphic spheres.
Examples and applications are presented
in Section \ref{sec:exandappl}.
The content of Section \ref{sec:stable}-\ref{sec:polystr}
is a description of the moduli space of holomorphic curves
relevant in our situation in view of the application of the
theory of polyfolds and abstract perturbations.
The proof of Theorem \ref{thm:mainthm}
is given in Section \ref{sec:proof}.


\section{Definition\label{sec:definition}}


\subsection{Completion\label{subsec:compl}}

A collar neighborhood of a concave boundary $(M,\alpha)$
of a symplectic cobordism $(W,\omega)$
is symplectomorphic to
$
\big([0,\varepsilon)\times M,\rmd(\rme^{a}\alpha)\big)
$
for some $\varepsilon>0$.
Let $\TT$ denote the set of all smooth
strictly increasing functions $(-\infty,0]\ra(0,1]$
that allow a smooth extension to $\R$
that restricted to $[0,\infty)$ coincides with $a\mapsto\rme^{a}$.
A {\bf concave end} is a symplectic manifold of the form
$
\big((-\infty,0]\times M,\rmd(\tau\alpha)\big)
$
with $\tau\in\TT$.
Gluing the concave end to $(W,\omega)$ along $(M,\alpha)$
results in a symplectic manifold $(W',\omega_{\tau})$
for $\tau\in\TT$.
In order to reflect the conformal nature we will call
$(W',\omega_{\tau})$
{\it the} {\bf completion} of $(W,\omega)$.


\subsection{Holomorphic spheres with nodes\label{subsec:holsphnod}}

Consider a Riemann surface $(S,j)$
with finitely many connected components.
A {\bf nodal pair} is a subset of $S$
that consists of two distinct points.
Nodal pairs are required to be pairwise disjoint.
Denote by $D$ a finite set of nodal pairs
such that the quotient space
$S/D$ is connected.
A {\bf nodal Riemann surface} is a triple $(S,j,D)$.
The nodal Riemann surface $(S,j,D)$
is said to be of {\bf genus zero}
if each connected component of $S$
is diffeomorphic to the $2$-sphere
and if the number of connected components of $S$
is equal to the number of nodal pairs $\#D$ plus one;
to phrase it differently, if $S/D$ is simply connected.

Let $J$ denote an almost complex structure on the completion of $W$.
We consider a nodal Riemann surface $(S,j,D)$ of genus zero.
A {\bf nodal} $J${\bf-holomorphic sphere}
is a smooth map $u\co S\ra W'$
that solves the non-linear Cauchy-Riemann equation
$Tu\circ j=J(u)\circ Tu$
and that descends
to a continuous map on the quotient $S/D$.
In case $D$ is empty $u$ is called {\bf un-noded}.


\subsection{Essential holomorphic foliations\label{subsec:essholfol}}

Let $(B,\omega_B)$ be an open symplectic manifold.
If $B$ has non-empty boundary we require
that $\partial B$ is closed.
Denote by $\omega_{\fs}$ the Fubini-Study form on $\C P^1$,
which integrates to total area $\pi$.
A {\bf local foliation by holomorphic spheres} in $(W,\omega)$
is a symplectic embedding
\[
\big(B\times\C P^1,\omega_B+\varrho^2\omega_{\fs}\big)\lra(W,\omega)\;,
\]
where  $\varrho$ is a positive real number.
If the boundary of $B$ is non-empty we assume
that $\partial B\times\C P^1$ is mapped diffeomorphically onto
$\partial W\setminus (M_-\cup M_+)$
assuming that besides the concave and the convex boundary
a further boundary component ({\it a posteriory}
equal to $\partial B\times\C P^1$) exist.

The local foliation $B\times\C P^1$
is assumed to be equipped with a compatible
almost complex structure of the form $J_B\oplus\rmi$.
All almost complex structures
on the completion $W'$ 
are assumed to restrict to the
split structure $J_B\oplus\rmi$
on $B\times\C P^1$ and are called {\bf admissible}.

We remark that by a theorem of Moser \cite{mos65}
for any positive area form $\sigma$ on $S^2$
with $\sigma(S^2)=\pi\varrho^2$
there exists a diffeomorphism of $S^2$
along which $\sigma$ pulls back to $\varrho^2\omega_{\fs}$.
The {\bf area} of a holomorphic sphere in the
local foliation is $\pi\varrho^2$.

\begin{defn}
  \label{essdef}
  A local foliation $B\times\C P^1\subset W$ as readily defined
  is called {\bf essential} if there exists a point
  $*\in B$ such that
  for all admissible compatible
  almost complex structures $J$ on $W'$
  any nodal $J$-holomorphic sphere in $W'$
  that is
  \begin{itemize}
  \item 
   homologous to $*\times\C P^1$,
  \item 
   non-constant restricted to any component of $S$,
  \item 
   and intersects $B\times \C P^1$ non-trivially
\end{itemize}
  is un-noded and up to a pre-composition
  with a M\"obius transformation equal to
  $z\mapsto (b,z)$ for some $b\in B$.
\end{defn}


\section{Examples and Applications\label{sec:exandappl}}

In \cite{geizeh12} the strong Weinstein conjecture
was verified
for contact manifolds that appear as the concave boundary
of a semipositive symplectic cobordism
that can be capped off along a convex boundary component
in a particular way.
The construction of the cap from \cite[Section 5.1]{geizeh12} generalizes
to the present context as follows.


\subsection{Holomorphic foliations via caps\label{subsec:holfolcap}}

We consider a non-empty closed contact manifold
$(N,\alpha_N)$.
For $\varepsilon>0$ sufficiently small we denote by
\[
(V,\omega_V)=\big((-\varepsilon,0]\times N,\rmd(\rme^{a}\alpha_N)\big)
\]
a cylindrical subset of the symplectization of $(N,\alpha_N)$.
We equip $(V,\omega_V)$ with an almost complex structure
$J_V$ that is {\bf compatible with the contact form} $\alpha_N$,
i.e.\ $J_V$ is invariant under translation,
restricts to a compatible complex structure
on the symplectic bundle $(\ker\alpha_N,\rmd\alpha_N)$,
and sends the Liouville vector field $\partial_a$
to the Reeb vector field of $\alpha_N$.

Moreover, let $(Q,\omega_Q)$ be a closed
symplectic manifold and
denote by $J_Q$ a compatible
almost complex structure on $(Q,\omega_Q)$.
The {\bf rational area spectrum}
of the symplectic form $\omega_Q$
is given by the image of the map
$H_2(Q;\Q)\ra\R$
obtained by integration against $\omega_Q$.
The spectrum is countable and hence
constitutes a residual subset of $\R$.

\begin{prop}
\label{prop:essential}
 Let $(W,\omega)$ be a symplectic manifold
 with boundary and consider $(V,\omega_V)$
 and $(Q,\omega_Q)$
 as stated above.
 We assume that $(W,\omega)$
 admits a local foliation
 \[
 \Big(V\times Q\times\C P^1,\omega_V+\omega_Q+\varrho^2\omega_{\fs}\Big)
 \]
 with
 $\partial W=N\times Q\times\C P^1$
 that is equipped with the almost complex structure
 \[J_V\oplus J_Q\oplus\rmi\;.\]
 The local foliation is essential
 provided $\pi\varrho^2$ is not a
 rational spectral value of $\omega_Q$.
\end{prop}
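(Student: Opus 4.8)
The plan is to show that \emph{every} point of $B=V\times Q$ can serve as the distinguished point $*$ of Definition~\ref{essdef}; since $V\times Q$ is connected the fibre class $[*\times\C P^1]$ does not depend on this choice, so it suffices to fix one $*$ and examine an arbitrary candidate curve. Note that here $W'=W$, as $\partial W$ carries no concave component. So let $J$ be admissible and let $u\co S\to W$ be a genus-zero nodal $J$-holomorphic sphere, homologous to $*\times\C P^1$, non-constant on every component of $S$, with $u(S)$ meeting the foliation domain $V\times Q\times\C P^1$.

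The first step is \emph{confinement}: $u(S)\subseteq V\times Q\times\C P^1$. On the collar $J$ equals the split structure $J_V\oplus J_Q\oplus\rmi$ with $J_V$ compatible with $\alpha_N$, so the $\R$-coordinate of the $V$-component of $u$ is subharmonic wherever $u$ maps into the collar; by the maximum principle for pseudoholomorphic curves in symplectizations (cf.\ \cite{hofe93,geizeh12}) no component of $S$ can meet both the interior of the collar and its complement in $W$, and since $S/D$ is connected and some component meets the collar, all of them do. Once $u$ is confined, write $u=(v,w,\varphi)$ with $v\co S\to V$ a $J_V$-holomorphic map, $w\co S\to Q$ a $J_Q$-holomorphic map, and $\varphi\co S\to\C P^1$ a holomorphic map. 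Since $\omega_V=\rmd(\rme^{a}\alpha_N)$ is exact on the collar, $\int_S u^{*}\omega_V=0$; as $u^{*}\omega_V=v^{*}\omega_V$ is pointwise non-negative, $v$ is constant on each component, hence (matching at the nodes) on all of $S$, say $v\equiv p_0$. Thus $u(S)\subseteq\{p_0\}\times Q\times\C P^1$.

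Next comes the homological bookkeeping, which is where the hypothesis on $\omega_Q$ enters. Writing $[u]=\beta+d\,[\C P^1]$ in $H_2(\{p_0\}\times Q\times\C P^1;\Z)=H_2(Q;\Z)\oplus\Z[\C P^1]$ and pairing with $[\omega]$, using that $[*\times\C P^1]$ has $\omega$-area $\pi\varrho^2$, gives
\[
\langle[\omega_Q],\beta\rangle+\pi\varrho^2 d=\pi\varrho^2 .
\]
Here $d\geq0$ since $\varphi$ restricts to a holomorphic self-map of $\C P^1$ on each component, and $\langle[\omega_Q],\beta\rangle\geq0$ since $\beta$ is represented by the $J_Q$-holomorphic map $w$; hence $d\in\{0,1\}$. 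If $d=0$ then $\langle[\omega_Q],\beta\rangle=\pi\varrho^2$ with $\beta\in H_2(Q;\Z)$, so $\pi\varrho^2$ would be a rational spectral value of $\omega_Q$, contradicting the assumption; this case is therefore impossible. So $d=1$, whence $\langle[\omega_Q],\beta\rangle=0$, the map $w$ has vanishing $\omega_Q$-area, and $w\equiv q_0$ is constant. Then $u$ is, on each component, a holomorphic map into the fibre $\{p_0\}\times\{q_0\}\times\C P^1\cong\C P^1$, of total degree $1$; a degree-zero component would be constant, contradicting the hypothesis, so $S$ is a single sphere, $u$ is un-noded, and $\varphi$ is a M\"obius transformation. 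Pre-composing $u$ with $\varphi^{-1}$ identifies it with $z\mapsto(b,z)$, $b=(p_0,q_0)\in B$, which is precisely the conclusion demanded by Definition~\ref{essdef}.

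The step I expect to be the main obstacle is confinement: one has to exclude, for an arbitrary admissible $J$, that $u$ leaks into the part of $W$ where $J$ is not controlled. I anticipate handling this exactly as in the symplectization maximum-principle arguments used in \cite{hofe93,geizeh12}, exploiting that the collar carries the split structure up to its inner boundary hypersurface. Everything following confinement is the elementary area-and-degree count above; the only further points to verify carefully are that the distinguished point and the fibre class are well defined (connectedness of $V\times Q$) and that the $Q$-part $\beta$ of $[u]$ genuinely lies in $H_2(Q;\Z)$, so that $\langle[\omega_Q],\beta\rangle$ is an element of the rational area spectrum of $\omega_Q$.
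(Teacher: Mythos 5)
Your proposal is correct and follows essentially the same route as the paper's proof: confinement of $u$ into a single slice $\{v\}\times Q\times\C P^1$ via the maximum principle on the $V$-collar together with exactness of $\omega_V$, followed by the K\"unneth/energy bookkeeping and the rational-spectrum hypothesis to force $d=1$, the $Q$-component constant, and a single un-noded degree-one sphere. The only difference is cosmetic: you first narrow $d$ to $\{0,1\}$ by positivity and then use the spectral hypothesis to rule out $d=0$, whereas the paper invokes the spectral hypothesis directly to get $\sum d_j=1$ and $\sum\omega_Q([u_j^Q])=0$ before applying positivity; both orderings are valid.
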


\begin{proof}
 Let $u\co S\ra W$ be a nodal holomorphic sphere
 that intersects the local foliation $V\times Q\times\C P^1$
 non-trivially.
 The restriction of $u$ to $u^{-1}\big(V\times Q\times\C P^1\big)$
 can be projected to the $(-\varepsilon,0]$-factor of $V$.
 The composition of the resulting map
 with the exponential map
 is subharmonic and has an interior maximum.
 By the maximum principle and an open-closed argument
 applied to each component of $S$ the image $u(S)$
 is contained in $\{a\}\times N\times Q\times\C P^1$
 for a suitable $a\in(-\varepsilon,0]$.
 Because the projection of $u$ to the
 exact symplectic manifold $(-\varepsilon,0]\times N$
 must be constant (by compatibility)
 the image of $u$ is in fact
 contained in $\{v\}\times Q\times\C P^1\equiv Q\times\C P^1$
 for a suitable $v\in V$.

 In view of the Definition \ref{essdef}
 we assume in addition that $u$ is homologous
 to $\C P^1\equiv*\times\C P^1$ in $W$
 for any choice of base point $*$ of $V\times Q$
 and that $u$ is non-constant
 on each component of $S$.
 Choose an ordering on the components of
 $S=S_1\sqcup\ldots\sqcup S_k$
 and write $u_j$ for the restriction of $u$ to the component $S_j$.
 Further, denote by $u^Q_j$ and $\varphi_j$
 the projections of $u_j$ to $Q$ and $\C P^1$,
 resp.
 According to K\"unneth's formula with respect to $Q\times\C P^1$
 we get
 \[
 \big[\C P^1\big]=
 \sum_{j=1}^k\big[u^Q_j\big]+
 \sum_{j=1}^kd_j\big[\C P^1\big]
 \]
 in $H_2W$,
 where $d_j$ is the degree of the holomorphic map $\varphi_j$.
 Because $\pi\varrho^2$ is not in the rational area spectrum of $\omega_Q$
 an application of $\omega_W$ shows that
 \[
 \sum_{j=1}^k\omega_Q\Big(\big[u^Q_j\big]\Big)=0
 \qquad\text{and}\qquad
 \sum_{j=1}^kd_j=1\;.
 \]
 Because the symplectic energy is non-negative by compatibility
 we get $k=1$, $d_1=1$, and $u^Q_1$ is constant.
 Therefore,
 there exists $q\in Q$ and an automorphism $\varphi$
 of $\C P^1$ such that $u(z)=\big(v,q,\varphi(z)\big)$
 for all $z\in\C P^1$.
\end{proof}

\begin{rem}
 The construction generalizes to a
 symplectic manifold $(V,\omega_V)$
 that has a weakly convex contact type boundary,
 see Remark \ref{rem:weak},
 or is the negative half-symplectisation
 of the stable Hamiltonian structure
 $(\omega_N,\rmd\theta)$
 that is induced by a symplectic fibration $\theta\co N\ra S^1$
 on the boundary $N$ of $V$
 with respect to $\omega_N:=\omega_V|_{TN}$,
 see \cite[p.~877]{dgz14}.
 In the second case one requires that $\pi\varrho^2$
 is not a rational area spectral value of
 $\omega_F+\omega_Q$,
 where $F$ is the typical fibre of $\theta$,
 which is symplectic with respect to
 $\omega_F:=\omega_V|_{TF}$.
 The compatible almost complex structures
 $J_V$ we are now considering
 are translation invariant,
 turn the fibres of $\theta$ into a
 holomorphic submanifold
 in each level of $(-\varepsilon,0]\times N$,
 and send the Reeb vector field of
 the stable Hamiltonian structure
 $(\omega_N,\rmd\theta)$
 to $\partial_a$,
 see \cite[Section 5]{dgz14}.
\end{rem}


\subsection{Stabilized Weinstein conjecture\label{subsec:stabwein}}

In the following we will construct a class
of symplectic cobordisms
with an essential local foliation as described.
Let $(P,\omega_P)$ be a {\bf symplectic filling},
i.e.\ a symplectic cobordism
with all boundary components
of contact type being convex.
Consider a closed hypersurface
\[
\Sigma\subset P\times Q\times\C
\]
that is of contact type with respect to
$\omega_P+\omega_Q+\rmd x\wedge\rmd y$.
The induced contact form on $\Sigma$
is denoted by $\alpha_{\Sigma}$.
Each component of $\Sigma$
bounds a relatively compact open domain
the so-called {\bf bounded domain}.
We require that the bounded domains are pairwise disjoint.
We denote the union of the domains by $D_{\Sigma}$.

Let $\varrho$ be a positive real number
such that $\pi\varrho^2$ is not in the rational
area spectrum of $(Q,\omega_Q)$
and greater than
the minimal area of a closed disc in $\C$
about the origin that contains the image
of the projection map
$\Sigma\subset P\times Q\times\C\ra\C$.
We define the {\bf symplectic cap} to be
\[
(C,\omega_C)=
\Big(
   P\times Q\times\C P^1\setminus D_{\Sigma},\;
   \omega_P+\omega_Q+\varrho^2\omega_{\fs}
\Big)\;.
\]
Applying Theorem \ref{thm:mainthm} to $(C,\omega_C)$
with the components of $D_{\Sigma}$
glued back
that correspond to the concave boundary components
we see that $(C,\omega_C)$
cannot have a convex boundary.
In other words
$(\Sigma,\alpha_{\Sigma})$
is the concave boundary of $(C,\omega_C)$.

\begin{rem}
  Let the dimension of $P\times Q\times\C$
  be $2n$.
  If the $(n-1)$-st power of the symplectic form
  $\omega=\omega_P+\omega_Q+\rmd x\wedge\rmd y$
  has a primitive $\mu$,
  as it is the case if $\omega_P$ is exact,
  the helicity (see \cite{mcsa04}) can be used as in \cite{zehzil13} to show
  that $\Sigma$ is the concave boundary of $(C,\omega_C)$.
  Indeed, by Stokes theorem the symplectic volume of
  $(D_{\Sigma},\omega)$
  equals $\int_{\Sigma}\mu\wedge\omega$,
  where $\Sigma$ is equipped with the boundary orientation
  induced by the symplectic orientation of $(D_{\Sigma},\omega)$.
  On the other hand,
  the restriction of $\omega$ to $T\Sigma$
  equals $\rmd\alpha_{\Sigma}$,
  where $\alpha_{\Sigma}$
  is the contact form induced by the local Liouville vector field $Y$,
  so that
  \[
  \Big(\mu|_{T\Sigma}-
  \alpha_{\Sigma}\wedge\big(\rmd\alpha_{\Sigma}\big)^{n-2}
  \Big)\wedge\rmd\alpha_{\Sigma}
  \]
  is an exact form on $\Sigma$.
  Consequently,
  $\int_{\Sigma}\mu\wedge\omega$
  equals the contact volume of $(\Sigma,\alpha_{\Sigma})$,
  so that
  \[
  i_Y\omega^n|_{T\Sigma}=
  n\alpha_{\Sigma}\wedge\big(\rmd\alpha_{\Sigma}\big)^{n-1}
  \]
  is a positive volume form on $(\Sigma,\alpha_{\Sigma})$.
  Hence, the boundary orientation on $\Sigma$
  equals the orientation induced by $\alpha_{\Sigma}$,
  i.e.\ the local Liouville vector field $Y$ points out.
\end{rem}

Consequently,
if $(A,\omega_A)$ is a symplectic cobordism
such that $(\Sigma,\alpha_{\Sigma})$
appears as convex boundary
gluing along $(\Sigma,\alpha_{\Sigma})$
yields a symplectic manifold
\[
(W,\omega_W)=(A,\omega_A)
\cup_{(\Sigma,\alpha_{\Sigma})}
(C,\omega_C)
\]
to which Theorem \ref{thm:mainthm} applies.
As an example we phrase.

\begin{cor}
\label{cor:geizehgen}
 The Gromov radius of
 \[
 \Big(
   P\times Q\times D^2,\;
   \omega_P+\omega_Q+\rmd x\wedge\rmd y
 \Big)
 \]
 is lower or equal than $\pi$.
 The strong Weinstein conjecture holds true
 for any closed contact type hypersurface
 in
 \[
 \Big(
   P\times Q\times\C,\;
   \omega_P+\omega_Q+\rmd x\wedge\rmd y
 \Big)
 \]
 provided that $P$ is not empty.
\end{cor}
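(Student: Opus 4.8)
Both assertions will follow from Theorem~\ref{thm:mainthm}, Proposition~\ref{prop:essential}, and the cap construction of Section~\ref{subsec:stabwein} once the manifolds in question are put into the right form, so the proof will be largely a matter of arranging the data. For the estimate on the Gromov radius the plan is to embed the space symplectically into a closed model cobordism covered by the theorem. I would fix $\varrho>1$ with $\pi\varrho^2$ not in the rational area spectrum of $(Q,\omega_Q)$; since that spectrum is countable, such $\varrho$ exist arbitrarily close to $1$. As $(D^2,\rmd x\wedge\rmd y)$ has area $\pi<\pi\varrho^2$, Moser's theorem yields a symplectic embedding $(D^2,\rmd x\wedge\rmd y)\hookrightarrow(\C P^1,\varrho^2\omega_{\fs})$, hence a symplectic embedding
\[
\big(P\times Q\times D^2,\;\omega_P+\omega_Q+\rmd x\wedge\rmd y\big)
\hookrightarrow
\big(P\times Q\times\C P^1,\;\omega_P+\omega_Q+\varrho^2\omega_{\fs}\big)
\]
into the closed symplectic manifold $(W,\omega)$ on the right. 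Taking $N=\partial P$ with the convex contact form of the filling and $V$ a collar of $\partial P$ in $P$, which is a cylindrical piece of the symplectisation of $(\partial P,\alpha_{\partial P})$, Proposition~\ref{prop:essential} applies with $\partial W=N\times Q\times\C P^1$ and shows that $(W,\omega)$ is a symplectic cobordism with an essential local foliation by holomorphic spheres of area $\pi\varrho^2$; its boundary, being foliated, is not of contact type. Consequently the Gromov radius of $(W,\omega)$ is at most $\pi\varrho^2$ --- as recorded above, and alternatively via the uniruledness Corollary~\ref{cor:gwstatement} together with the monotonicity lemma for holomorphic curves through the centre of a ball. Since the Gromov radius is monotone under symplectic embeddings, the Gromov radius of $\big(P\times Q\times D^2,\omega_P+\omega_Q+\rmd x\wedge\rmd y\big)$ is at most $\pi\varrho^2$ as well, and letting $\varrho\downarrow 1$ along admissible values gives the bound $\pi$.

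For the strong Weinstein conjecture, let $\Sigma\subset P\times Q\times\C$ be a closed contact type hypersurface and assume $P\neq\varnothing$, so that the convex boundary of the filling $P$ supplies the collar needed below. As $\Sigma$ is compact, its projection to $\C$ is contained in a closed disc about the origin, of some area $A$; I would pick $\varrho$ with $\pi\varrho^2>A$ and $\pi\varrho^2$ outside the rational area spectrum of $(Q,\omega_Q)$, and form the symplectic cap
\[
(C,\omega_C)=\Big(P\times Q\times\C P^1\setminus D_{\Sigma},\;\omega_P+\omega_Q+\varrho^2\omega_{\fs}\Big)
\]
of Section~\ref{subsec:stabwein}. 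By the analysis there, $(C,\omega_C)$ is a symplectic cobordism carrying an essential local foliation by holomorphic spheres of area $\pi\varrho^2$, and $(\Sigma,\alpha_{\Sigma})$ appears as its concave boundary. Applying Theorem~\ref{thm:mainthm} to $(C,\omega_C)$ then produces a null-homologous Reeb link in $(\Sigma,\alpha_{\Sigma})$ of total action smaller than $\pi\varrho^2$, which is the assertion.

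Since the substance is carried by the results of Sections~\ref{sec:definition} and~\ref{sec:exandappl}, I do not expect a genuine analytic obstacle here. The points that require care are the verification that the hypotheses of Proposition~\ref{prop:essential} and of the cap construction hold in this situation --- in particular that the collar of the convex boundary of $P$ really is a cylindrical piece of symplectisation, which is exactly where the hypothesis $P\neq\varnothing$ enters, and that the domains bounded by the components of $\Sigma$ can be taken pairwise disjoint --- together with the elementary observation that $\varrho$ may be chosen simultaneously outside the countable rational area spectrum of $\omega_Q$ and above the relevant disc area.
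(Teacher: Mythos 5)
Your proof is correct and follows essentially the same route the paper intends: the strong Weinstein statement is exactly the cap construction of Section~\ref{subsec:stabwein} combined with Proposition~\ref{prop:essential} and Theorem~\ref{thm:mainthm}, and the Gromov radius estimate comes from embedding $P\times Q\times D^2$ into $P\times Q\times\C P^1$ of area $\pi\varrho^2$ with $\varrho\downarrow 1$ taken along values avoiding the rational area spectrum of $\omega_Q$, invoking the capacity bound (equivalently the uniruledness corollary). The one place where your reading sharpens the stated hypothesis is that the argument actually needs $\partial P\neq\emptyset$ rather than merely $P\neq\emptyset$, since the cylindrical collar of the convex boundary is what makes Proposition~\ref{prop:essential} applicable and what keeps the local foliation domain away from $D_{\Sigma}$; you flag this correctly, and it matches the paper's intent since $P$ is a filling.
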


In particular, we get the (very) stabilized strong Weinstein conjecture
for hypersurfaces of contact type in $Q\times\C^{\ell}$ for $\ell\geq2$,
cf.\ Floer-Hofer-Viterbo \cite{fhv90}.
If $H$ is a Donaldson hypersurface in $(Q,\omega_Q)$
(see \cite{don96})
so that the complement of $H$
has the structure of a Stein manifold $(P,\omega_P)$
then the strong Weinstein conjecture follows
for contact type hypersurfaces in $(Q\setminus H)\times\C$.
By \cite{agm01}
it is possible to construct symplectic hypersurfaces
$H$ that lie in the complement of a given
compact isotropic submanifold in $(Q,\omega_Q)$.


\subsection{Cotangent bundles\label{subsec:cotanbu}}

We consider the unit sphere $S^{2m+1}$ in $\C^{m+1}$.
The Weinstein conjecture holds true
for any closed hypersurface $\Sigma$
that is of contact type in $T^*S^{2m+1}$,
cf.\ \cite{vit99}.
The contact structure on $\Sigma$ is taken to be the one
induced from $T^*S^{2m+1}$.
Notice, that no assumption is made
on the bounded component
of the complement $T^*S^{2m+1}\setminus\Sigma$
in view of the zero section,
cf.\ \cite{hovi89,suhzeh13}.
We claim that the strong Weinstein conjecture
holds for $\Sigma$ as well.
 
Indeed,
$S^{2m+1}$ embeds into $\C^{m+1}\times\C P^m$
as a Lagrangian submanifold $L$
via the map that sends $z\in S^{2m+1}\subset\C^{m+1}$
to $\big(z,[\bar{z}]\big)$.
The map $S^{2m+1}\ni w\mapsto [w]$,
where $[w]$ denotes the complex line through
$w$ and the origin,
is the so-called Hopf fibration,
along which the Fubini-Study form $\omega_{\fs}$
pulles back to $\rmd\mathbf{x}\wedge\rmd\mathbf{y}$.
Because the complex conjugation
$z\mapsto\bar{z}$ is anti-symplectic
the symplectic form
$\rmd\mathbf{x}\wedge\rmd\mathbf{y}+\omega_{\fs}$
on $\C^{m+1}\times\C P^m$ vanishes
pulled back to $S^{2m+1}$.
Hence, the $(2m+1)$-dimensional submanifold
$L\subset\C^{m+1}\times\C P^m$
is Lagrangian.
Using the fibrewise radial Liouville flow of $T^*S^{2m+1}$
we can isotope $\Sigma$ into a small neighborhood
of the zero-section.
Hence, with
Weinstein's tubular neighborhood theorem
(\cite{wein71})
$\Sigma$ can be realized as a contact type
hypersurface in $\C^{m+1}\times\C P^m$
with the characteristic foliation to be conjugate
to the one induced by the inclusion
$\Sigma\subset T^*S^{2m+1}$.
The claim follows with Corollary \ref{cor:geizehgen}.
 
In fact, this shows the strong Weinstein conjecture
for all cotangent bundles over closed manifolds
of the form $X\times S^{2m+1}$ with $m\geq0$,
which admit a Lagrangian embedding into
$T^*X\times\C^{m+1}\times\C P^m$.
To get products with $S^2$ observe that
$S^2$ embeds as a Lagrangian surface
into the unit ball in $\C^2$ blown-up at two points,
\cite[Section 6 Example (3)]{dgz14}.
To the blown-up ball the complement of the unit ball
in $\C\times (\C\cup\infty)$ is glued on.
With the construction of a symplectic cap
(the cap being $(\C\setminus B_1)\times\C P^1$)
and Theorem \ref{thm:mainthm} it follows
that any closed hypersurface of contact type
in the cotangent bundle of $X\times S^2$
satisfies the strong Weinstein conjecture.
 
Similarly, because for any closed orientable
$3$-manifold $Y$ the connected sum
$L=Y\#(S^1\times S^2)$
admits a Lagrangian embedding into $\C^3$
the strong Weinstein conjecture holds true
for $T^*L$,
see \cite{eems13}.
This is of particular interest for contact type
hypersurfaces in $T^*Y$
that miss one fibre.
Examples are given by
energy surfaces of
classical mechanical systems
on $Y$ with a sign changing
potential function,
cf.\ \cite{suhzeh13}.


\section{Stable curves and the moduli space\label{sec:stable}}

We consider a symplectic cobordism $(W,\omega)$
that admits an essential local foliation by holomorphic spheres
$B\times\C P^1\subset W$,
see Section \ref{subsec:essholfol}.
Denote by $*$ the particular base point of $B$
as required in Definition \ref{essdef}.
We denote by $(M,\alpha)$ the concave boundary of $(W,\omega)$
and associate the completion $(W',\omega_{\tau})$
attaching the negative half-symplectisation of $(M,\alpha)$
as described in Section \ref{subsec:compl}.
Moreover,
we assume the contact form $\alpha$ to be {\bf non-degenerate},
that is along periodic solutions of the Reeb vector field
of $\alpha$ the linearised Poincar\'e return map
has no eigenvalue equal to $1$.
Let $J$ be an admissible compatible almost complex structure
on $(W',\omega_{\tau})$ that is compatible
with the contact forms $\alpha$ and $\alpha_+$
on the concave end and in a neighborhood of $M_+$, resp.,
as described at the beginning of Section \ref{sec:exandappl}.
The aim of this section is to study stable holomorphic
one-marked curves of genus zero in $(W',\omega_{\tau},J)$.


\subsection{Stable maps\label{subsec:stabmap}}

Let $(S,j,D)$ be a nodal Riemann surface of genus zero.
The set of points, the so-called {\bf nodal points},
that belong to a nodal pair is denoted by $|D|$.
We provide $(S,j,D)$ with a finite set $M$ of pairwise distinct
points in $S\setminus |D|$, the so-called {\bf marked points}.
The points in $|D|\cup M$ are called {\bf special}.
A connected component $C$ of $S$ is called {\bf stable}
if the number of special points in $C$ is greater or equal
than $3-2\genus(C)$.

A {\bf stable map} $(S,j,D,M,u)$ in $W'$
is a continuous map $u\co S\ra W'$
defined on a marked nodal Riemann surface $(S,j,D,M)$
that descends to a continuous map on the quotient $S/D$
such that:
\begin{itemize}
\item 
  The map $u$ is of Sobolev class $H_{\loc}^3$ on $S\setminus|D|$
  and of weighted Sobolev class $H^{3,\delta}$
  near the nodal points $|D|$ for some $\delta\in(0,2\pi)$,
  see \cite[Definition 1.1]{hwz-gw11}. 
\item 
  The cohomological integral $\int_Cu^*\omega_{\tau}$
  is non-negative for any connected component $C$ of $S$
  and for one $\tau\in\TT$
  (and hence for all by Stokes theorem).
\item 
  If a connected component $C$ of $S$ is {\it not} stable,
  then  $\int_Cu^*\omega_{\tau}>0$.
\end{itemize}


\subsection{The moduli space\label{subsec:modulisp}}

Two stable maps $(S,j,D,M,u)$ and $(S',j',D',M',u')$
are said to be {\bf equivalent} if there exists a
diffeomorphism $\varphi\co S\ra S'$ such that
$\varphi^*j'=j$, $\varphi_*D=D'$, $\varphi M=M'$,
and $u'\circ\varphi=u$.
Often we will write $u$ for $(S,j,D,M,u)$.
The equivalence class $[S,j,D,M,u]$ of a stable map
is called a {\bf stable curve} and is denoted by $\mathbf u$.
We denote by
\[\ZZ\]
the set of all one-marked genus zero stable curves $\mathbf u$
in $W'$ such that the map induced by $u$ on the quotient $S/D$
is homologous to $*\times\C P^1$,
where $*\in B$ is the chosen base point of $B$.

A stable curve $\mathbf u$ is called {\bf holomorphic}
if it can be represented by a stable map $u$
that is holomorphic.
The definition does not depend on the choice of $u$.
We denote by
\[\MM\]
the {\bf moduli space}
of all holomorphic stable curves $\mathbf u\in\ZZ$.

Observe that for all $b\in B$ the class of
$\big(\C P^1,\rmi,\emptyset,\infty,z\mapsto (b,z)\big)$
represents an element of $\MM$ that we will call
a {\bf standard sphere}.
Due to the stability condition and the local foliation
$B\times\C P^1$ being essential
all non-standard holomorphic curves in $\MM$
do not intersect $B\times\C P^1$.
We will identify the subset of standard spheres
in $\MM$ with $B\times\C P^1$,
where the $\C P^1$-factor corresponds to the marked point.
The complement is denoted by
\[
\MM_{\cut}=\MM\setminus B\times\C P^1\;.
\]


\subsection{A priori uniform bounds\label{subsec:bounds}}

The Dirichlet-energy of all nodal holomorphic spheres $u$
induced by stable holomorphic curves $\mathbf u\in\MM$
equals
\[
\int_Su^*\omega_{\tau}=
[\omega_{\tau}]\Big(\big[\C P^1\big]\Big)=
\pi\varrho^2
\]
for all $\tau\in\TT$.

The moduli space $\MM$ admits upper bounds in the following sense.
Because the almost complex structure $J$ is compatible
with the contact form $\alpha_+$ 
the maximum principle implies that no stable holomorphic curve
can intersect a neighborhood $(\varepsilon,0]\times M_+$
of $M_+$,
cf.\ the first part of the proof of Proposition \ref{prop:essential}.
Similarly, because only standard curves $\mathbf u\in\MM$ intersect
the foliation domain $B\times\C P^1$
we can bound $\MM_{\cut}$ away from $\partial B$
if the boundary of $B$ is not empty.

As it will turn out (see Section \ref{subsec:gwint})
there are no lower bounds for $\MM$
along the concave end.
This will prove Theorem \ref{thm:mainthm}
as the following lemma shows:

\begin{lem}
\label{lem:firstcase}
 If there exists a sequence $\mathbf u_k$
 in $\MM$ such that $u_k(S_k)$ intersects
 $(-\infty,-k]\times M$ non-trivially,
 then $(M,\alpha)$ admits a null-homologous
 Reeb link of total action less than $\pi\varrho^2$.
\end{lem}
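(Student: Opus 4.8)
The plan is to perform a standard SFT-type compactness (Gromov) argument on the sequence $\mathbf u_k$ and read off a null-homologous Reeb link from the resulting broken configuration. First I would recall the a priori bounds: by Section~\ref{subsec:bounds} every $\mathbf u_k$ has Dirichlet energy exactly $\pi\varrho^2$, none of the curves intersects the neighborhood $(\varepsilon,0]\times M_+$ of the convex boundary, and the non-standard parts stay away from $\partial B$; so all the curves live in the completion $W'$ with uniformly bounded energy and escape only through the concave end $(-\infty,0]\times M$. Since $u_k(S_k)$ meets $\{a\le -k\}\times M$, I would reparametrize (shifting in the $\R$-direction of the symplectization) to center the sequence near the concave end and apply the SFT compactness theorem of Bourgeois--Eliashberg--Hofer--Wysocki--Zehnder, using non-degeneracy of $\alpha$ and admissibility/compatibility of $J$. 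The output is a holomorphic building whose top level consists of punctured finite-energy spheres in the symplectization $\R\times M$ and whose lower levels sit in $W'$; the positive punctures of the topmost curves are asymptotic to closed Reeb orbits of $\alpha$.

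Next I would extract the Reeb link. The collection $\Gamma$ of Reeb orbits appearing as the asymptotic limits at the (top-level) positive punctures, each counted with its covering multiplicity and oriented by $R$, is my candidate null-homologous Reeb link. Homological triviality follows because the limit building is, as a cycle, homologous to $*\times\C P^1$ in $W'$ (the homology class is preserved under the compactness argument), hence to a class carried by $B\times\C P^1\subset W$; since the building has a component escaping into $(-\infty,-\infty)$, the portion of the building lying in $W'$ furnishes a singular $2$-chain in $W$ whose boundary is exactly $\Gamma$ (as a $1$-cycle in $M$), up to the usual care with multiplicities of the punctures — this is the same bookkeeping as in \cite[Section 3.3]{geizeh12}. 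Thus $[\Gamma]=0$ in $H_1(M)$. To see that $\Gamma$ is non-empty I would argue that if there were no positive punctures, Stokes' theorem together with $\int_{S}u^*\omega_\tau=\pi\varrho^2>0$ would force the building to be a closed nodal holomorphic sphere of positive energy entirely contained in a symplectization level $\R\times M$ (which is exact, an $\omega_\tau$-chain argument gives zero energy) or in $W'$ near the concave end — contradicting either exactness in the cylindrical region or the fact that the sequence genuinely escapes to $a=-\infty$.

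Finally, the action bound. For each top-level punctured finite-energy sphere $v$ with positive punctures asymptotic to orbits $\gamma_1^v,\dots$ and negative punctures to $\delta_1^v,\dots$, the nonnegativity of the $\rmd\alpha$-energy gives $\sum_i\mathcal A(\gamma_i^v)-\sum_j\mathcal A(\delta_j^v)\ge 0$, i.e. the total action of the positive asymptotics of the top level is at most that of the negative asymptotics, which in turn (propagating down through the building, each intermediate orbit appearing once as a negative and once as a positive limit) is controlled by the total $\omega_\tau$-area of the part of the building inside $W'$, which is at most $\pi\varrho^2$. Since at least one component genuinely lies in the symplectization and carries strictly positive $\rmd\alpha$-energy (this is precisely what "escaping to $a=-\infty$" buys us, via the stretching of the neck), the inequality is strict, so the total action of $\Gamma$ is less than $\pi\varrho^2$. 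The main obstacle I anticipate is the global regularity/compactness step — making SFT compactness applicable requires knowing $\MM$ behaves well globally, which is exactly why the polyfold/abstract-perturbation machinery of Sections~\ref{sec:stable}--\ref{sec:polystr} is invoked; one works with the perturbed solution spaces (a branched orbifold with weights) and checks that the perturbation can be taken small and supported away from the symplectization end, so that the broken configurations obtained in the limit are genuinely $J$-holomorphic buildings to which the action estimate applies verbatim, exactly as in \cite{geizeh12}.
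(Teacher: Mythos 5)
Your overall strategy is the same as the paper's: use the escaping sequence to trigger the bubbling-off/SFT compactness analysis of \cite{behwz03}, extract a Reeb link from the asymptotics of the resulting holomorphic building, and bound its total action by the symplectic area $\pi\varrho^2$. Two remarks. First, your description of the building is inverted: the concave end $(-\infty,0]\times M$ is a \emph{negative} cylindrical end of the completion $W'$, so the main level of the limiting building lies in $W'$ and the symplectization levels $\R\times M$ sit \emph{below} it; the paper's Reeb link $\Gamma$ is obtained from the positive punctures of the \emph{lowest} level, which (being lowest) has no negative punctures at all.

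Second, and this is a genuine gap, your null-homology argument lands in the wrong group. You propose to bound $\Gamma$ by ``the portion of the building lying in $W'$'', producing a $2$-chain in $W$ whose boundary is $\Gamma\subset M$. This only shows $[\Gamma]=0$ in $H_1(W)$, whereas a null-homologous Reeb link must represent $0$ in $H_1(M)$, and the map $H_1(M)\to H_1(W)$ induced by inclusion is in general not injective (e.g.\ $M=S^1\times S^2$ as boundary of $W=D^2\times S^2$). The paper's proof avoids this by working entirely with the \emph{lowest} level of the building: a punctured finite-energy sphere in the symplectization $\R\times M$ with exclusively positive punctures asymptotic to the orbits of $\Gamma$. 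Projecting this level to the $M$-factor yields a $2$-chain \emph{in $M$} whose boundary is $\Gamma$, so $[\Gamma]=0$ in $H_1(M)$ as required. The action estimate is then read off from the Hofer energy of this lowest level, which is strictly less than $\pi\varrho^2$ because the main level in $W$ carries positive $\omega$-energy; this is a cleaner way of getting the strict inequality than the energy bookkeeping you sketch, though the underlying idea is consistent.
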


\begin{proof}
 If $u_k$ maps a component $C$ of $S_k$
 into the concave end
 $u_k|_C$ is constant
 by Stokes theorem.
 Therefore, we find sequences $z_k$ and $w_k$
 of points on $S_k$
 such that $u_k(z_k)\in W$ and
 $u_k(w_k)$ is contained in the concave end
 so that the projection of $u_k(w_k)$ to the $\R$-factor
 tends to $-\infty$.
 With respect to a metric on $W'$ that equals
 a product metric on the concave end
 (product with the Euclidean metric on the $\R$-axis)
 the gradient of $u_k$ blows up.
 This follows with a mean value argument
 as in \cite{geizeh13,hwz03}.
 The bubbling off analysis from \cite{behwz03} shows
 the existence of a holomorphic building
 of height $k_-|1$ for some $k_-\geq1$.
 The lowest level of the building is represented by a
 punctured finite energy surface
 in the symplectisation of $(M,\alpha)$
 that has Hofer-energy less than $\pi\varrho^2$
 and positive punctures exclusively.
 Near the punctures the finite energy surface
 converges to cylinders over closed Reeb orbits
 of $\alpha$ exponentially fast.
 Hence, the projection of a component
 to $M$ along the $\R$-axis defines a $2$-chain
 whose boundary is a Reeb link in $(M,\alpha)$
 of total action less than $\pi\varrho^2$,
 cf.\ \cite{geizeh12,hofe93}.
\end{proof}

In other words,
in order to prove Theorem \ref{thm:mainthm}
we have to exclude the case
where there exists $K>0$ such that for all
$\mathbf u$ in $\MM$ the image $u(S)$
is contained in $(-K,0]\times M\cup W$.

We remark that the proof of Lemma \ref{lem:firstcase}
uses the assumption $\alpha$ being non-degenerate.
This is not a restriction as the arguments in Section \ref{sec:proof}
will show.


\section{Polyfold structure\label{sec:polystr}}


\subsection{Topology of the space of stable curves\label{subsec:topofspstabcu}}

The space $\ZZ$ of stable curves has a natural topology
as described in \cite[Section 2.1/3.4]{hwz-gw11}
that is second countable, paracompact, and
Hausdorff, see \cite[Theorem 1.6]{hwz-gw11}.
The topology is induced by the $H^3$-topology
of maps on nodal Riemann surfaces
that have exponential decay in holomorphic polar coordinates
near the nodes.
Part of the construction
is a choice of auxiliary marked points
that stabilize all connected components of the domain
if necessary.
The additional points are fixed using local
codimension-$2$ submanifolds in $W'$
that are transverse to the image of the curve
intersecting in a single point.
It is required that the additional marked points
are mapped to the intersection points.
If a non-trivial automorphism group
is acting on a stable curve the auxiliary
marked points are supposed to be chosen equivariantely.
The stabilization of the domain makes it possible
to use the topology of the corresponding
Deligne-Mumford space in terms of uniformizing families,
see \cite[Definition 2.12]{hwz-gw11}.
In order to describe the desingularization
of the nodes (i.e.\ the gluing)
uniformizing families are used to obtain
uniformizers for the space of stable maps $\ZZ$,
see \cite[Section 3.1/3.2]{hwz-gw11}.

We remark that the evaluation map
$\ev\co\ZZ\ra W'$ that maps $\mathbf u$
to the value $u(z)$ at the marked point $z$
is continuous,
see \cite[p.~2290]{hwz-III09} or \cite[p.~7]{hwz-gw11}.


\subsection{The target space\label{subsec:target}}

Let $u$ be a stable map
that represents a class $\mathbf u$ in $\ZZ$.
We denote by $\xi$ a continuous section of
$\Hom\big(\Lambda T^*S,u^*TW'\big)$
such that for each $z\in S$
the map $\xi(z)\co T_zS\ra T_{u(z)}W'$
is complex anti-linear with respect to $J\big(u(z)\big)$.
The section $\xi$ is required to be of
Sobolev class $H^2_{\loc}$ on $S\setminus|D|$
and of weighted Sobolev class $H^{2,\delta}$ near $|D|$
for some $\delta\in(0,2\pi)$,
see \cite[Section 1.2]{hwz-gw11}.
An equivalence $\varphi$ of stable maps
$(S,j,D,M,u)$ and $(S',j',D',M',u')$
is an {\bf equivalence} of
$(S,j,D,M,u,\xi)$ and $(S',j',D',M',u',\xi')$
if $\xi'\circ T\varphi=\xi$.
The equivalence class is denoted by $\boldxi$
and the space of all equivalence classes $\boldxi$
by
\[
\WW
\;.
\]
By \cite[Theorem 1.9]{hwz-gw11}
$\WW$ has a natural topology
that is second countable, paracompact, and
Hausdorff so that the projection $p\co\WW\ra\ZZ$
that maps the class $\boldxi$ to the class $\mathbf u$,
if $\xi$ is a section along $u$,
is continuous.
The {\bf Cauchy-Riemann operator} $\CR{J}$ is a section of $p$
whose value $\CR{J}\mathbf u$ at a point $\mathbf u\in\ZZ$
is the class represented by
\[
\Big(
  S,j,D,M,u,\tfrac12\big(Tu+J(u)\circ Tu\circ j\big)
\Big)
\;.
\]
Notice that the moduli space $\MM$ equals the zero set
$\{\CR{J}\mathbf u=\mathbf 0\}$.


\subsection{Polyfold Fredholm section\label{subsec:polfredsec}}

The space of stable curves $\ZZ$
is a polyfold (see \cite[Section 3]{hwz-III09})
so that the evaluation map
$\ev\co\ZZ\ra W'$ is sc-smooth,
see \cite[Theorem 1.7/1.8]{hwz-gw11},
\cite[Theorem 1.10]{hwz-III09}.
The projection map $p\co\WW\ra\ZZ$
is a strong polyfold bundle,
see \cite[Theorem 1.10]{hwz-gw11}.
By \cite[Theorem 1.11]{hwz-gw11}
the Cauchy-Riemann operator
$\CR{J}\co\ZZ\ra\WW$
is a sc-smooth, component proper
Fredholm section which is naturally oriented.
The Fredholm index of
$\CR{J}$ equals the dimension of $W$
because the first Chern class of $(TW',J)$
evaluates to $2$ on $\big[*\times\C P^1\big]$.

On the level of objects
the strong polyfold bundle structure of
$p\co\WW\ra\ZZ$ is obtained
by gluing strong polyfold bundles
in the sense of ep-groupoids.
To understand the Cauchy-Riemann section
$\CR{J}\co\ZZ\ra\WW$
it is enough to consider a
local M-polyfold bundle chart.
We would like to apply this principle
for standard holomorphic spheres
$\mathbf u\in\MM$.
Represent $\mathbf u$ by the map
$u\co z\mapsto (b,z)$ for some $b\in B$.
Denote by $X$ the sc-Hilbert manifold
of pairs $(v,z)$,
where $z\in\C P^1$ is a marked point near $\infty$
and $v$ a map $\C P^1\ra W'$ of class $H^3$
that is close to $u$ mapping $0$ and $1$
into $B\times 0$ and $B\times 1$, resp.
Moreover,
let $E$ be the sc-Hilbert space bundle over $X$
with fibre consisting of all $H^2$-maps
from $\C P^1$ into the space of $J(v)$-anti-linear
maps $\Lambda T^*\C P^1\ra v^*TW'$
for $v\in X$.
A M-polyfold bundle chart of
$p\co\WW\ra\ZZ$ about $\mathbf u$
is then given by $E\ra X$
because $u$ is un-noded,
so that the surrounding splicing core is the full ambient space
and the sc-retraction map equals the identity.

The Cauchy-Riemann operator
$\CR{J}\co\ZZ\ra\WW$
induces a local section
$f\co X\ra E$ near $x=(u,\infty)$.
The {\bf linearization} $f'(x)\co T_xX\ra E_x$
is the vertical differential,
which is the composition of the sc-differential
$T_xf\co T_xX\ra T_0E$
and the projection
$T_0E=T_xX\oplus E_x\ra E_x$,
see \cite[Section 4.4]{hwz-I07}.
By \cite[p.~55 and Definition 5.5]{hwz-gw11} and
\cite[Section 3]{hwz-II09} the linearization $f'(x)$
is a sc-Fredholm operator
and coincides with the linearized Cauchy-Riemann
operator at $(u,\infty)$
in the $C^{\infty}$-sense (cf.\ \cite{mcsa04})
on a dense subset,
see \cite[Proposition 2.14/2.15]{hwz-I07}.
Because a sc-Fredholm operator
is regularizing $f'(x)$ is surjective
with kernel being of dimension equal to $\dim W$
that consists of smooth elements exclusively.


\subsection{Abstract Perturbations\label{subsec:abstrpert}}

Let $\lambda\co\WW\ra\Q\cap [0,\infty)$ denote a
sc$^+$-multisection.
In a local representation $P\co E\ra X$ of $p\co\WW\ra\ZZ$
we write $\Lambda\co E\ra\Q\cap [0,\infty)$
for the sc$^+$-multisection that corresponds to $\lambda$.
The local sc$^+$-sections of $P$
that are attached to $\Lambda$
are denoted by $s_i$.
A pair $\big(\CR{J},\lambda\big)$ is called {\bf transverse}
if in local representations $f\co X\ra E$
of the Cauchy-Riemann operator
the linearizations
\[
f'(u)-s'_i(u)\co T_uX\lra E_u
\]
are surjective for all $i$
and for all $\mathbf u\in\ZZ$
that are contained in
$\big\{\lambda\big(\CR{J}\big)>0\big\}$,
see \cite[Definition 4.7(1)]{hwz-III09}.
Observe that in case $f'(u)$ is onto
for $\mathbf u\in\MM$ that has a simple representation
by an un-noded holomorphic sphere map $u$
we can choose one local section that is identically $0$
in a neighborhood of $u$ in $X$,
i.e.\ $\lambda(0_u)=1$.


\subsection{Gromov-Witten integration\label{subsec:gwint}}

In this section we give a proof of
Theorem \ref{thm:mainthm}
in the case the contact form $\alpha$
is non-degenerate.
For $K>0$ denote by $W_K$
the open domain in $W'$
that is obtained from $W'$ by removing
$(-\infty,-K]\times M$, $[-1/K,0]\times M_+$,
and $B_K\times\C P^1$,
where $B_K$ is a subset of $B$ that is
diffeomorphic to either a ball of radius $1/K$
or, if $\partial B$ is not empty, a collar neighborhood
$[-1/K,0]\times\partial B$ of $\partial B$.
We have to exclude uniform lower bounds for $\MM$,
see Lemma \ref{lem:firstcase}.

Arguing by contradiction we assume that there exists $K>0$
such that for all
non-standard curves $\mathbf u\in\MM_{\cut}$
the image $u(S)$ is contained in $W_K$.
There exists a neighborhood $\UU$ of $\MM_{\cut}$
in $\ZZ$ such that for all $\mathbf u\in\UU$
the image $u(S)$ is contained in $W_K$,
cf.\ Section \ref{subsec:topofspstabcu}.
For example we can choose $\UU=\ev^{-1}(W_K)$
because of $\ev(\MM_{\cut})\subset W_K$.
Moreover, by \cite{behwz03} and
\cite[Proposition 4.9 and Remark 4.10]{hwz-gw11}
$\MM$ is a compact subset of $\ZZ$.

By \cite[Theorem 4.17]{hwz-III09}
there exists a small sc$^+$-multisection $\lambda$
such that $\big(\CR{J},\lambda\big)$
is transverse.
Notice, that for all standard spheres
$\mathbf u\in\MM$ the isotropy group of
$\mathbf u$ is trivial and $f'(u)$ is onto,
where $f$ is a local representation of $\CR{J}$.
The proof of \cite[Theorem 4.17]{hwz-III09}
implies that $\lambda$ can be chosen to be
trivial for all standard spheres $\mathbf u\in\MM$,
i.e.\ in a local representation $\Lambda$
is identically $1$ on the zero-section
over the set of standard holomorphic spheres $u$,
cf.\ \cite[Definition 3.35]{hwz-III09}.
The support of $\lambda$
can be assumed to be contained in $\UU$.
Therefore, the solution set
\[
\SS=
\Big\{
\mathbf u\in\ZZ
\;\Big|\;
\lambda\big(\CR{J}\mathbf u\big)>0
\Big\}
\]
of $\big(\CR{J},\lambda\big)$
is an oriented compact branched suborbifold
of dimension $\dim W$ with boundary $\partial \SS$,
see \cite[Theorem 4.17]{hwz-III09}
and \cite[Section 1.4]{hwz-gw11}.
Moreover,
$\SS\setminus\big(B\times\C P^1\big)$
is contained in $\UU$
and $\SS$ is equipped with the weight function
\[
\vartheta\co\ZZ\lra\Q\cap [0,\infty)\,;\\
\quad\mathbf u\longmapsto\lambda\big(\CR{J}\mathbf u\big)\;.
\]
A neighborhood of $\partial\SS$ in $\SS$,
which is equal to a neighborhood of $\partial\MM$
in $\MM$,
can be identified with a collar neighborhood
of $\partial B\times\C P^1$ in
$B\times\C P^1\subset W$.

In order to reach the desired contradiction
let $\Omega$ be a top-dimensional differential form on $W'$
of total volume $\int_W\Omega=1$
that has support in a sufficiently small open ball
which we require to have closure contained
in the interior of $B_K\times\C P^1$.
Notice,
that the evaluation map
$\ev\co\SS\ra W'$
induces an embedding of
$B_K\times\C P^1$ into  the solution space $\SS$,
which coincides with the moduli space $\MM$
along the image
$B_K\times\C P^1$.
Therefore,
\[
1=\int_W\Omega=
\int_{(\SS,\vartheta)}\ev^*\Omega\;.
\]
Using a compactly supported diffeotopy
in the interior of $W'$
we can isotope the support of $\Omega$
into the complement of
$W_K\cup\big(B_K\times\C P^1\big)$.
Denoting by $\Omega_1$ the image of $\Omega$
under pull back along the diffeotopy
the difference $\Omega-\Omega_1$
has a primitive $\mu$
with compact support in the interior of $W'$.
Because the support of $\Omega_1$
does not lie in the image of the evaluation map
$\ev\co\SS\ra W'$ the restriction of $\ev^*\Omega_1$
to $\SS$ vanishes.
By Stokes theorem \cite[Theorem 1.27]{hwz-int10}
\[
\int_{(\SS,\vartheta)}\ev^*\Omega=
\int_{(\partial\SS,\vartheta)}\ev^*\mu\;.
\]
But $\ev^*\mu$ restricted to the boundary $\partial\SS$
must vanish because $\mu$ has compact support in
the complement of $\partial W'$.
This is a contradiction
that finishes the proof of 
Theorem \ref{thm:mainthm}
for a non-degenerate contact form $\alpha$.

With the same argument one shows that if $(W,\omega)$
has no concave boundary
$(W,\omega)$ cannot have a convex boundary,
either.


\section{Proof of Theorem \ref{thm:mainthm}\label{sec:proof}}

We claim
that there exist $N\in\N$ and periodic solutions
$x_1,\ldots,x_N$
of the Reeb vector field $R$ of $\alpha$
that are of (not necessary prim) period
$T_1,\ldots,T_N$, resp.,
such that
$T_1+\ldots+T_N<\pi\varrho^2$
and
$[x_1]+\ldots+[x_N]=0$
in the homology of $M$.
With Section \ref{subsec:gwint}
and Lemma \ref{lem:firstcase}
the claim follows for a non-degenerate
contact form $\alpha$.
If $\alpha$ is degenerate
we find a sequence of
contact forms $\alpha_k$ on $M$
that are non-degenerate
such that $\alpha_k$ tends to $\alpha$
in $C^{\infty}$,
see \cite[Proposition 6.1]{hwz98}.
Observe,
that the Reeb vector field $R_k$
of $\alpha_k$ tends to $R$ in the
$C^{\infty}$-topology too.

First of all
we consider a sequence of periodic solutions
$x_k$ of $R_k$ that are of period
$T_k<\pi\varrho^2$.
To the reparametrised sequence
$y_k(t)=x_k(T_kt)$
the Arzel\`a-Ascoli theorem applies
so that a subsequence of $y_k$
converges in $C^{\infty}(\R/\Z)$
to a loop $y$ in $M$
that is tangent to $\R R$
and has action
\[
T:=\int_0^1y^*\alpha
=\lim_{k\ra\infty}\int_0^1y_k^*\alpha_k
\leq\pi\varrho^2\;,
\]
because $\int_0^1y_k^*\alpha_k=T_k$.
The loop $x(t):=y(t/T)$ is a $T$-periodic solution of $R$.
To express this circumstance
we will say that a subsequence of $x_k$
converges to $x$ after reparametrisation.

With this preliminaries
we apply the established existence result
to the non-degenerate contact form
$\alpha_k$ for each $k$.
This results in a sequence of
periodic solutions
$x^k_1,\ldots,x^k_{N_k}$
of $R_k$ that are of period
$T^k_1,\ldots,T^k_{N_k}$, resp.,
such that the periods
sum up to total acton less than
$\pi\varrho^2$ and the loops
represent the zero class
in the homology of $M$.
By the flow-box theorem applied to $R$
we get $A_k>A/2$ for $k$ sufficiently large,
where $A$ and $A_k$ denote
the minimal action of a periodic solution
of $R$ and $R_k$, resp.
Hence, $N_k$ is bounded from above
by $2\pi\varrho^2/A$
so that we can assume
the number of link components
to be constantly equal to $K\in\N$.
Therefore, we find a subsequence
$x^k_1,\ldots,x^k_K$
that converges after reparametrisation
to periodic solutions
$x_1,\ldots,x_K$
of $R$ with period $T_1+\ldots+T_K\leq\pi\varrho^2$
such that 
$[x^k_1]=[x_1],\ldots,[x^k_K]=[x_K]$ for
$k$ sufficiently large.
In particular,
the sum $[x_1]+\ldots+[x_K]$ must vanish.
The desired claim follows
with the exception of a non-strict
inequality for the total action
of the null-homologous Reeb link.
In order to obtain the strict inequality
observe that we can realize
$\big(M,(1+\delta)\alpha\big)$
as the concave boundary of $(W,\omega)$
for some small $\delta>0$.
\hfill
Q.E.D.

\begin{rem}{\bf (Weak contact type boundary condition)}
\label{rem:weak}
 In Theorem \ref{thm:mainthm}
 the convex boundary can be replaced by a
 $J${\bf -convex boundary} in the sense of
 \cite{elia90} or \cite{mcd91},
 cf.\ \cite[Section 3.2 (C4)]{geizeh12} or \cite{mnw13}.
 This means the boundary components $M_+$
 oriented as the boundary of $(W,\omega)$
 admit a positive contact form $\alpha_+$
 such that in a neighborhood of $M_+$
 there exists an almost complex structure $J$
 satisfying the following properties:
 \begin{itemize}
 \item 
   $J$ is tamed by $\omega$,
 \item 
   $J$ leaves the kernel $\xi_+$ of $\alpha_+$ invariant,
   i.e.\ $\xi_+=TM\cap JTM$, and
 \item 
   $J$ restricted to the contact structure $\xi_+$
   is tamed by $\rmd\alpha_+$.
 \end{itemize}
 The reason is that there exists a collar neighborhood $U$
 of $M$ such that any $J$-holomorphic sphere
 that intersects $U$ must be constant.
 Indeed, $U$ can be chosen to be equal to
 $(-\varepsilon,0]\times M$ for $\varepsilon>0$
 sufficiently small
 such that the restricton of $\partial_a$ to the boundary
 $M$ equals $-JR_+$, where $R_+$
 is the Reeb vector field of $\alpha_+$.
 Shrinking $\varepsilon>0$ if necessary
 and setting $K=1/\varepsilon$
 the symplectic form $\rmd(\rme^{Ka}\alpha_+)$
 tames $J$ on $U$ and $-\rmd(\rmd\rme^{Ka}\circ J)$
 is positive on $J$-complex lines,
 cf.\ \cite[Remark 4.3]{geizeh10}.
\end{rem}

\begin{proof}[{\bf Proof of Corollary \ref{cor:gwstatement}}]
 In the case $(W,\omega)$ has no contact type boundary
 the proof of Theorem \ref{thm:mainthm} shows that
 the evaluation map restricted to the solution space $\SS$
 is surjective.
 Sending the abstract perturbations to zero
 one obtains
 a family of corresponding nodal solutions
 through each point of $W$
 that admit converging subsequences in the topology of $\ZZ$,
 cf.\ \cite[Theorem 4.17]{hwz-III09}.
 The limits are holomorphic nodal spheres.
\end{proof}


\begin{ack}
  The research of this article was carried
  out while the authors stay at the {\it Institut for Advanced Studies} Princeton.
  We would like to thank Helmut Hofer for the invitation
  and for introducing us to the beauty of polyfolds.
  We thank Peter Albers, Matthew Strom Borman, Joel Fish,
  and Hansj\"org Geiges for enlightening discussions.
  We thank Chris Wendl and the referees for valuable
  comments on the first manuscript.
\end{ack}


\end{document}